\newtheorem{theorem}{Theorem}[section]
\newtheorem{proposition}[theorem]{Proposition}
\newtheorem{corollary}[theorem]{Corollary}
\newtheorem{lemma}[theorem]{Lemma}
\newtheorem{definition}[theorem]{Definition}
\newtheorem{remark}[theorem]{Remark}
\theoremstyle{definition}
\theoremstyle{definition}
\newtheorem{problem}[subsection]{Problem} 
\newcommand{\FPn}{FP_n}
\newcommand{\FPO}{FP_1}
\newcommand{\FPm}{FP_{n-1}}
\newcommand{\Zk}{Z_K}
\newcommand{\rat}{\text{rat}}
\journal{Journal of Algebra}
\begin{document}

\begin{frontmatter}

%% Title, authors and addresses

%% use the tnoteref command within \title for footnotes;
%% use the tnotetext command for theassociated footnote;
%% use the fnref command within \author or \affiliation for footnotes;
%% use the fntext command for theassociated footnote;
%% use the corref command within \author for corresponding author footnotes;
%% use the cortext command for theassociated footnote;
%% use the ead command for the email address,
%% and the form \ead[url] for the home page:
%% \title{Title\tnoteref{label1}}
%% \tnotetext[label1]{}
%% \author{Name\corref{cor1}\fnref{label2}}
%% \ead{email address}
%% \ead[url]{home page}
%% \fntext[label2]{}
%% \cortext[cor1]{}
%% \affiliation{organization={},
%%             addressline={},
%%             city={},
%%             postcode={},
%%             state={},
%%             country={}}
%% \fntext[label3]{}

\title{
    Schur-Weyl Duality and Higher Abel-Jacobi Invariants \\
    for Tautological Cycles in $\mathcal{M}_{g,n}$
}

%% use optional labels to link authors explicitly to addresses:
%% \author[label1,label2]{}
%% \affiliation[label1]{organization={},
%%             addressline={},
%%             city={},
%%             postcode={},
%%             state={},
%%             country={}}
%%
%% \affiliation[label2]{organization={},
%%             addressline={},
%%             city={},
%%             postcode={},
%%             state={},
%%             country={}}

%% Author affiliation
\author[label1]{Mohammad Reza Rahmati\corref{cor1}}
\cortext[cor1]{Corresponding author: 
  Tel.: +52-477-853-8372;}
\ead{rahmati@cio.mx}

\affiliation[label1]{organization={Centro de Investigaciones en \'Optica, A.C.},%Department and Organization
            addressline={Loma del Bosque 115, Lomas del Campestre}, 
            city={Le\'on},
            postcode={37150}, 
            state={Guanajuato},
            country={M\'exico}}

%% Abstract
\begin{abstract}
This article investigates the Hodge theory of the moduli space of genus \(g\) curves with \(n\) marked points, establishing new connections between Schur-Weyl duality for \(\mathfrak{sp}_{g}\) and higher Abel-Jacobi invariants. We develop a represe\\ ntation-theoretic framework that decomposes higher Abel-Jacobi invariants of tautological cycles in \(C_{g}^{n}\) according to symplectic Lie algebra representations, leveraging the Leray filtration and motivic decompositions compatible with \(\mathfrak{sp}_{2g}\)-actions. Central to this work is the introduction of \textbf{higher Faber-Pandharipande cycles} \(FP_n = \pi_1^{\times 2}(\Delta_{12}^n \cdot \psi_1)\) in \(CH^{n+1}(C_g^2)\), a new family of tautological cycles generalizing classical constructions. We prove these cycles are non-torsion under optimal genus constraints: for families over \((n-1)\)-dimensional bases, \(FP_n\) is not rationally equivalent to zero when \(g \geq 3n+1\). Furthermore, we determine the precise position of \(FP_n\) in the Leray filtration of \(C_g^2 \to M_g\), showing it lies in depth \(n+1\) but no deeper, with explicit non-vanishing in \(H^{n+1}(M_g, R^{n+1}f_*\mathbb{Q})\) on the \(V_{(n+1,1)}\)-isotypic component. This yields the first systematic link between Schur-Weyl duality and higher transcendental invariants, revealing that higher diagonals encode geometric phenomena invisible to standard tautological classes.  
\end{abstract}

%% Keywords
\begin{keyword}
Variation of Hodge Structure, Higher Abel-Jacobi Invariants, Moduli Space of Curves, Tautological classes, Faber-Pandharipande Cycle, Gross-Schoen cycle.
%% keywords here, in the form: keyword \sep keyword

%% PACS codes here, in the form: \PACS code \sep code

%% MSC codes here, in the form: \MSC code \sep code
%% or \MSC[2008] code \sep code (2000 is the default)

\end{keyword}

\end{frontmatter}

%% main text
\section{Introduction}\label{sec:intro}
This paper addresses the interface between Hodge theory and the moduli space of genus $g$ curves. We discuss applications of Lie algebra representation theory (Schur-Weyl duality) to tautological classes in Chow groups of moduli spaces of curves with $n$ ordered marked points using Hodge-theoretic invariants. Following E. Ancona \cite{A}, any decomposition of $H^i(A^n, \mathbb{Q})$ into $\mathbb{Q}$-sub-Hodge structures lifts to a decomposition in the category of Chow motives. In \cite{PTY}, Ancona's theorem is applied to obtain a motivic decomposition for $C_g^n$. Motivated by R. Hain's work \cite{H}, we study algebraic cycles (particularly tautological cycles) in Chow groups of moduli spaces. Our contribution applies this decomposition to higher Abel-Jacobi (AJ) invariants, providing explicit calculations.

An important problem is studying the Chow ring $CH^{\bullet}(M_g)$ of the moduli space of genus $g$ curves and its counterpart $M_{g,n}$ for $n$-marked points. Due to the large size of these Chow groups, we focus on the subring of tautological classes. A key challenge is determining when a given class is rationally equivalent to zero. We address the non-triviality of specific tautological cycles restricting to cycles in $CH^{\bullet}(C_g^n)$.

\subsection{Historical Background}
The problem originates in work by M. Green and P. Griffiths \cite{GG1} on cycles introduced by Faber and Pandharipande on self-products of curves $C$. They studied
\[
Z_D = D \times D - n \cdot (\imath_{\Delta_C})_* D \in CH^2(C^{\times 2})
\]
where $C$ is smooth genus $g$, $D$ a degree $n$ zero cycle, and $\imath_{\Delta}$ the diagonal embedding. They investigated when $Z_D \stackrel{\text{rat}}{=} 0$, proving $Z_K \stackrel{\text{rat}}{\neq} 0$ for canonical class $K_C$ when $g \geq 4$ and $Z_D \stackrel{\text{rat}}{\neq} 0$ for generic $D$ when $g \geq 2$. Their method computes the second infinitesimal invariant of the spread $\mathfrak{Z}_D$ in a family $\mathcal{C} \times \mathcal{C} \to S$. Key results include
\begin{itemize}
\item $cl_{C \times C}^0(Z_K) = [Z_K] = 0$
\item $cl_{C \times C}^1(Z_K) = 0$ 
\item $AJ_{C \times C}^0(Z_K) = 0$
\item $Z_K \in L^2CH^2(C \times C)$
\item $cl_{C \times C}^2(Z_K) = 0$
\item $AJ_{C \times C}^1(Z_K) \neq 0$
\item $AJ_{C \times C}^1(Z_K)^{tr} \neq 0 \neq \overline{AJ_{C \times C}^1(Z_K)^{tr}}$.
\end{itemize}

\subsection{Problem Statement}
We systematically construct algebraic cycles of type $Z_D$ from tautological relations in $CH^{\bullet}(M_g)$ and the tautological ring of $C_g^n = C_g \times_{M_g} \cdots \times_{M_g} C_g$. Consider the first Faber-Pandharipande cycle
\begin{equation}\label{eq:fp1}
FP_1 := \pi_1^{\times 2} \Delta_{12} \cdot \psi_1 \in CH^2(C_g^2)
\end{equation}
with explicit formula \cite{PTY},
\begin{multline}\label{eq:fp1-pty}
FP_1 = \Delta_{12} \cdot \psi_1 - \frac{1}{2g-2}\psi_1 \psi_2 - \frac{1}{2g-2}(\psi_1^2 + \psi_2^2) \\
+ \frac{1}{(2g-2)^2}\kappa_1(\psi_1 + \psi_2) + \frac{1}{(2g-2)^2} \kappa_2 - \frac{1}{(2g-2)^3}\psi_1 \kappa_1^2
\end{multline}
Restricting to $CH^2(C^2)$ gives $\Delta_{12} \cdot \psi_1 - \frac{1}{2g-2}\psi_1 \psi_2$, coinciding with Green-Griffiths' cycle $Z_K$. We generalize this by defining higher Faber-Pandharipande cycles, introducing \textbf{higher Faber-Pandharipande cycles} $FP_n := \pi_1^{\times 2}(\Delta_{12}^n \cdot \psi_1) \in CH^{n+1}(C_g^2)$.

\begin{problem}\label{prob:fp2}
Prove that the restriction of $FP_2 := \pi_1^{\times 2} \Delta_{12}^2 \cdot \psi_1 \in CH^3(C_g^2)$ over a generic family $\mathcal{C}$ is non-trivial for $g \geq 7$.
\end{problem}

\begin{problem}\label{prob:fpn}
For $FP_n := \pi_1^{\times 2} (\Delta_{12}^n \cdot \psi_1) \in CH^{n+1}(C_g^2)$ and $r:S \hookrightarrow M_g$ with $\dim(S) = n-1$, show the restriction is non-trivial when $g \geq 3n+1$.
\end{problem}

\begin{problem}\label{prob:rep-theory}
Develop a systematic method to analyze tautological cycles in $M_{g,n}$ and $C_g^n$ using $Sp_{2g}\mathbb{Q}$ representation theory.
\end{problem}

\subsection{Contributions}
Our main contributions establish new connections between Hodge theory, representation theory, and the geometry of moduli spaces. We characterize the Lewis filtration for symmetric powers of curves in Proposition~\ref{prop:dec_filt}, showing that the filtration $N_iCH_0(S^nC)$ induced by base point inclusions yields a direct sum decomposition $\bigoplus_{i=0}^n \langle o^{\times i} \rangle_* CH_0(S^{n-i}C)^0$ (Corollary~\ref{cor:Lewis}), providing a combinatorial description dual to the Leray filtration. Building on Schur-Weyl duality, we prove in Proposition~\ref{th:schur-dec} that the Chow motive of curve self-products decomposes $\mathfrak{sp}_{2g}$-equivariantly as $\bigoplus_\lambda h^{(p-n_\lambda)}(S,\mathbf{V}_\lambda)$, with Corollary~\ref{th:schur-inv} establishing that this decomposition respects higher Abel-Jacobi invariants: $[AJ(\mathfrak{Z})]_{i-1} = \bigoplus_\lambda [AJ(\mathfrak{Z})]_{i-1}^\lambda$. 

Proposition~\ref{prop:spreads} reformulates higher Abel-Jacobi theory by proving a product formula: for cycles $\mathfrak{Z}$ and $\mathfrak{W}$ with non-trivial Hodge invariants, their product satisfies either $[(\mathfrak{Z}\times\mathfrak{W})]_{i+j}^{\lambda+\mu} \neq 0$ or $\overline{[AJ(\mathfrak{Z}\times\mathfrak{W})]_{i+j-1}^{tr}} \neq 0$ depending on their invariants. We analyze $CH^\bullet(C_g^n)$ in Proposition~\ref{prop:embed}, showing that for generic embeddings $S \hookrightarrow M_g \hookrightarrow A_g$ with $g \geq 7$, the first non-vanishing Leray term $H^j(S,R^{2n-j})$ satisfies $j \geq i$ where $i$ is determined by Weyl group combinatorics.

Central to this work is Definition~\ref{def:secFaber}, introducing \textbf{higher Faber-Pandha\\ -ripande cycles} $FP_n := \pi_1^{\times 2}(\Delta_{12}^n \cdot \psi_1) \in CH^{n+1}(C_g^2)$. Theorem~\ref{th:fp2} proves their non-triviality: for families over $(n-1)$-dimensional bases $S \hookrightarrow M_g$, $FP_n$ is not rationally equivalent to zero when $g \geq 3n+1$. Corollary~\ref{cor:faber} establishes that these cycles occupy depth $n+1$ in the Leray filtration of $C_g^2 \to M_g$, with explicit non-vanishing in the $V_{(n+1,1)}$-isotypic component of $H^{n+1}(M_g,R^{n+1}f_*\mathbb{Q})$.

\subsection{Tools and Methods}
We employ higher Hodge theory and representation theory to analyze tautological cycles. Central to our approach is the Lewis filtration $L^{\bullet}$ on Deligne cohomology for families $\mathfrak{X} \to S$, which refines the Abel-Jacobi map and yields higher cycle classes $[\mathfrak{Z}]_i$ and Abel-Jacobi invariants $[AJ(\mathfrak{Z})]_{i-1}$. This filtration is functorial under correspondences and compatible with Grothendieck motives. 

Crucially, we integrate Schur-Weyl duality for $\mathfrak{sp}_{2g}$-representations to decompose the Chow motive of $C_g^n$ as $\bigoplus_\lambda h^{(p-n_\lambda)}(M_g,\mathbf{V}_\lambda)$. This decomposition respects the Leray filtration and induces a splitting of higher Abel-Jacobi invariants into isotypic components $[AJ(\mathfrak{Z})]_{i-1}^\lambda$, enabling representation-theoretic analysis of Hodge invariants.

\section{Higher Cycle and Abel-Jacobi Maps}\label{sec:higher-inv}
Let $X$ be projective variety over $\mathbb{C}$, $\dim X = n$, $Z \in CH^p(X)$. Spread $Z$ over $\pi: \mathfrak{X} \to S$ ($S$ quasi-projective) via $sp: CH^p(X) \cong CH^p(\mathfrak{X})$, $\mathfrak{Z} = sp(Z)$. Define
\[
\psi: CH^p(X) \cong CH^p(\mathfrak{X}) \xrightarrow{c_{\mathcal{D}}} H_{\mathcal{D}}^{2p}(\mathfrak{X}, \mathbb{Q}(p))
\]
with Leray filtration $L^{\bullet}$ on Deligne cohomology. The $i$-th higher cycle class is
\[
cl_X^i(Z) = \beta_i(Gr_L^i \psi(Z)) = [\mathfrak{Z}]_i
\]
When $[\mathfrak{Z}]_i = 0$, set $AJ_X^{i-1}(Z) = [AJ(\mathfrak{Z})]_{i-1}$. The filtration $L^i CH^p(X) = \psi^{-1}(L^i H_{\mathcal{D}}^{2p}(\mathfrak{X},\mathbb{Q}(p)))$ satisfies,

\begin{lemma}[\cite{K2}]
For $Z \in CH^p(X)$ with $\psi_i(Z) \neq 0$, then $Z \stackrel{\text{rat}}{\neq} 0$. Exactly one holds
\begin{itemize}
\item $Z \in L^i$
\item $Z \notin L^i$ and $\psi_i(Z) \neq 0$.
\end{itemize}
\end{lemma}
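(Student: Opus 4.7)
The statement splits into two independent pieces, and both are essentially formal consequences of the construction unpacked in the preceding paragraphs; my plan is to extract them by tracing definitions rather than invoking any deep input.

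For the first assertion, the plan is to observe that the whole tower of maps $\psi_i$ factors through the Deligne cycle class map $c_{\mathcal{D}}$, which is itself defined on Chow groups (and hence on cycles modulo rational equivalence). Concretely I would write
\[
\psi_i : L^i CH^p(\mathfrak{X}) \hookrightarrow CH^p(\mathfrak{X}) \xrightarrow{c_{\mathcal{D}}} H_{\mathcal{D}}^{2p}(\mathfrak{X},\mathbb{Q}(p)) \twoheadrightarrow Gr_L^i H_{\mathcal{D}}^{2p}(\mathfrak{X},\mathbb{Q}(p))
\]
and note that the spread isomorphism $sp: CH^p(X) \cong CH^p(\mathfrak{X})$ transports rational equivalence classes faithfully. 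Thus a nonzero $\psi_i(Z)$ forces a nonzero class in $CH^p(X)$, i.e.\ $Z \stackrel{\text{rat}}{\ne} 0$.

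For the dichotomy, the two alternatives are visibly mutually exclusive, so only exhaustiveness needs comment. Given the hypothesis $\psi_i(Z)\neq 0$, logically either $Z\in L^i$ (first bullet) or $Z\notin L^i$, and in the second case the hypothesis itself supplies the extra clause. The only thing that deserves unpacking is what $\psi_i(Z)$ means when $Z \notin L^i$: here I would appeal to the equivalent definition $L^i CH^p(X) := \psi^{-1}\bigl(L^i H_{\mathcal{D}}^{2p}(\mathfrak{X},\mathbb{Q}(p))\bigr)$ recorded just before the lemma, which lets $\psi_i(Z)$ be interpreted for any $Z$ by composing $\psi$ with the graded quotient. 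Then $Z\notin L^i$ is equivalent to some $\psi_j(Z)\neq 0$ with $j<i$, consistent with a nonzero value appearing at level $i$ through the filtration machinery.

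There is no genuine obstacle; the entire content is bookkeeping with the Leray filtration $L^{\bullet}$ on Deligne cohomology and its pullback to $CH^p$. The one point I would be careful about in writing it out is to keep consistent with the two equivalent descriptions of $L^{\bullet} CH^p$ — as iterated kernels of the $\psi_{j-1}$'s, and as the preimage of the Leray filtration on Deligne cohomology — so that the statement ``$Z\in L^i$ iff $\psi_j(Z)=0$ for all $j<i$'' is unambiguous. Once this is fixed, both clauses of the lemma follow immediately.
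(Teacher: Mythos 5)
Your proposal is correct and matches the paper's treatment: the paper merely observes that the lemma is ``a reformulation of the definition,'' and your factorization of $\psi_i$ through $c_{\mathcal{D}}$ together with the tautological reading of the dichotomy is exactly the bookkeeping behind that remark. One small imprecision worth cleaning up: $Gr_L^i = L^i/L^{i+1}$ still requires $\psi(Z)\in L^i$, so ``composing $\psi$ with the graded quotient'' does not by itself define $\psi_i(Z)$ off $L^i$ --- the cleaner interpretation of the second bullet is the one you reach at the end, namely that $Z\notin L^i$ means $\psi_j(Z)\ne 0$ for some $j<i$.
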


The Leray spectral sequence degenerates for $\pi: X \to S$,
\[
H^k(X, \mathbb{Q}) = \bigoplus_{p+q=k} H^p(S, R^q\pi_* \mathbb{Q})
\]
By polarization $\mathcal{L}$, we have Lefschetz decomposition $R^q\pi_* \mathbb{Q} = \bigoplus L^l \cdot P_{q-2l}$ where $P_l$ are primitive local systems ($Sp(2g)$-representations $V_\lambda$).

\begin{lemma}[\cite{K2}]\label{lem:prod-filt}
If $Z \in L^i CH_0(X)$, $W \in L^j CH_0(Y)$, then $Z \times W \in L^{i+j} CH_0(X \times Y)$.
\end{lemma}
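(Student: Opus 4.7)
The strategy is to reduce the lemma to two of the axiomatic properties of $L^{\bullet}$ that were listed in Section 2: functoriality under pullback, and sub-multiplicativity with respect to the intersection product. The first step is to rewrite the external product as an intersection product on $X \times Y$. Denote by $p_X : X \times Y \to X$ and $p_Y : X \times Y \to Y$ the canonical projections; both are flat, so $p_X^*$ and $p_Y^*$ are well defined on Chow groups, and one has the standard identity
\begin{equation}
Z \times W = p_X^*(Z) \cdot p_Y^*(W) \in CH^{\dim X + \dim Y}(X \times Y) = CH_0(X \times Y).
\end{equation}

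From here, the argument is essentially formal. Preservation of $L^{\bullet}$ under pullback gives $p_X^*(Z) \in L^i CH^{\dim X}(X \times Y)$ and $p_Y^*(W) \in L^j CH^{\dim Y}(X \times Y)$. Applying sub-multiplicativity of $L^{\bullet}$ with respect to intersection then yields
\begin{equation}
Z \times W \;=\; p_X^*(Z) \cdot p_Y^*(W) \;\in\; L^{i+j} CH^{\dim X + \dim Y}(X \times Y) \;=\; L^{i+j} CH_0(X \times Y),
\end{equation}
which is the desired conclusion.

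The real obstacle is not in this deduction but in being confident that the two invoked properties do hold for the concrete filtration defined via $\psi^{-1}(L^{\bullet} H_{\mathcal{D}}^{\bullet})$. Pullback compatibility follows from the contravariant functoriality of Deligne cohomology and of the spread construction, since spreads are compatible with flat maps between base varieties. Sub-multiplicativity is the more delicate point: it reflects that the cup product on Deligne cohomology respects the Leray filtration of $\pi: \mathfrak{X} \to S$ with the expected shift, so that the product of a class in $L^i$ with one in $L^j$ lands in $L^{i+j}$. Once these two structural facts are granted, as they are in the characterization of $L^{\bullet}$ recalled in Section 2, the lemma follows with no further computation, and one could alternatively verify it directly from the description $L^iCH^n(X) = \bigcap_\sigma \ker(\pi_{X,\sigma})_*$ by checking that the Chow-Künneth projectors on $X \times Y$ decompose through those on $X$ and $Y$.
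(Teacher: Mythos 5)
Your proof is correct and follows essentially the same route as the paper: both rewrite $Z \times W$ as $pr_1^*Z \cdot pr_2^*W$ and then invoke preservation of $L^{\bullet}$ under pullback together with its sub-multiplicativity. Your additional remarks on why those two properties hold, and the alternative check via the description $L^iCH^n(X) = \bigcap_\sigma \ker(\pi_{X,\sigma})_*$, go beyond the paper's one-line proof but do not change the underlying argument.
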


\begin{proposition}[\cite{K1},\cite{K2}]\label{prop:cycle-class-prod}
Let $X,Y$ smooth projective, $\dim X = m$, $\dim Y = n$. If $Z \in L^i CH_0(X)$ with $cl_X^i(Z) \neq 0$, $W \in L^j CH_0(Y)$ with $cl_Y^j(W) \neq 0$, then $Z \times W \in L^{i+j} CH_0(X \times Y)$ satisfies $cl_{X \times Y}^{i+j}(Z \times W) \neq 0$.
\end{proposition}

\begin{theorem}[\cite{K2}]\label{thm:aj-prod}
Let $X,Y$ smooth projective, $\dim X = m$, $\dim Y = n$. If $Z \in L^i CH_0(X)$ with $cl_X^i(Z) \neq 0$, $W \in L^{j+1} CH_0(Y)$ with $\overline{AJ_Y^j(W)^{\text{tr}}} \neq 0$ and $cl_Y^l(W) = 0$ ($l > j$), then $Z \times W \in L^{i+j+1} CH_0(X \times Y)$ has $AJ_{X \times Y}^{i+j}(Z \times W)^{\text{tr}} \neq 0$.
\end{theorem}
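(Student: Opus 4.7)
The first claim $Z\times W\in L^{i+j+1}CH_0(X\times Y)$ is immediate from the sub-multiplicativity of the Lewis filtration, exactly as in Lemma~3.1. Write $\mathfrak{Z}\subset X\times S_1$ and $\mathfrak{W}\subset Y\times S_2$ for the spreads, taken over relatively generic bases (cf. the remark after Proposition~3.2). Before we can even speak of $[AJ(\mathfrak{Z}\times\mathfrak{W})]_{i+j}^{\text{tr}}$ we must check that its companion cycle class $[\mathfrak{Z}\times\mathfrak{W}]_{i+j+1}$ vanishes. Decomposing the Leray graded piece at level $i+j+1$ via Künneth and using the product compatibility of the $\Psi_\bullet$, this class is a sum of terms $\pi_a([\mathfrak{Z}])\otimes\pi_c([\mathfrak{W}])$ with $a+c=i+j+1$; the constraints $\mathfrak{Z}\in L^i$ and $\mathfrak{W}\in L^{j+1}$ force $a=i$ and $c=j+1$, and this remaining term vanishes because $cl_Y^{j+1}(W)=0$ by hypothesis.

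With this in hand I would compute the transcendental piece $[AJ(\mathfrak{Z}\times\mathfrak{W})]_{i+j}^{\text{tr}}$ using the current description $\{\int_{\partial^{-1}(\mathfrak{Z}\times\mathfrak{W})}(\cdot)\}$ modulo periods of formula~(19). The key input is a product formula for chain integrals: for a test form $\eta=\eta_1\wedge\eta_2$ in the Künneth summand $H^i(S_1)\otimes H^{2m-i}(X)\otimes H^j(S_2)\otimes H^{2n-j-1}(Y)$ of the target Leray piece, with $\eta_1$ of Hodge type $(m,m)$ on $X\times S_1$ and $\eta_2$ on $Y\times S_2$ of the type relevant to $W$, one obtains
\[
\int_{\partial^{-1}(\mathfrak{Z}\times\mathfrak{W})}\eta_1\wedge\eta_2=\Bigl(\int_{\mathfrak{Z}}\eta_1\Bigr)\cdot\Bigl(\int_{\partial^{-1}\mathfrak{W}}\eta_2\Bigr)=[\mathfrak{Z}]_i(\eta_1)\cdot[AJ(\mathfrak{W})]_j^{\text{tr}}(\eta_2)
\]
modulo periods. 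The other Künneth contributions vanish for index reasons: those with the $\mathfrak{Z}$-factor at Leray level $a<i$ vanish outright, and those with $a=i$ and $c\ne j+1$ pair trivially because $[\mathfrak{W}]_c=0$ for $c\le j$ (by $\mathfrak{W}\in L^{j+1}$) and for $c>j$ (by hypothesis). Hence $[AJ(\mathfrak{Z}\times\mathfrak{W})]_{i+j}^{\text{tr}}$ coincides with the mixed pairing on the right-hand side.

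The hard part is to ensure that this mixed pairing survives the further projection modulo the sub-Hodge subspace $SF^{1,\bullet}$ of (21), which is precisely why the hypothesis on $W$ is stated on the projected invariant $\overline{AJ_Y^j(W)^{\text{tr}}}$ rather than on $AJ_Y^j(W)^{\text{tr}}$ itself. Since $[\mathfrak{Z}]_i\ne 0$ is Hodge, one can choose $\eta_1$ of pure type $(m,m)$ lying outside $F^1\otimes\overline{F^\bullet}+\overline{F^1}\otimes F^\bullet$ on the $X\times S_1$ factor. A Künneth analysis of the subspace $SF^{1,\bullet}(H^{i+j}(S_1\times S_2)\otimes H^{2(m+n)-i-j-1}(X\times Y))$ then forces any residual ambiguity involving such $\eta_1$ to be concentrated in the $SF^{1,\bullet}$ subspace of the $Y\times S_2$ factor, and the assumption $\overline{AJ_Y^j(W)^{\text{tr}}}\ne 0$ provides $\eta_2$ for which the second pairing is non-zero in that quotient. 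Relative genericity of $S_1$ and $S_2$ forbids cancellations between the different Künneth summands, so the mixed pairing descends to a non-zero value, giving $\overline{AJ_{X\times Y}^{i+j}(Z\times W)^{\text{tr}}}\ne 0$ and hence $AJ_{X\times Y}^{i+j}(Z\times W)^{\text{tr}}\ne 0$. The principal technical obstacle is precisely this Hodge-bigraded bookkeeping on the product $SF^{1,\bullet}$, which is what motivates passing to the projected invariant $\overline{AJ^{\text{tr}}}$ in the hypothesis.
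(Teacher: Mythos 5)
The paper does not actually prove this theorem; it cites \cite{K2} and singles out one \emph{ingredient}: when $[\mathfrak{Z}]_i\ne 0$, the identification in (34) produces a \emph{non-trivial} holomorphic map $\Omega^i(X)\to\Omega^i(S_1)$, equivalently the extreme Hodge component $[\mathfrak{Z}]_{i,0}\in H^{i,0}(S_1)\otimes H^{m-i,m}(X)$ of $[\mathfrak{Z}]_i$ is nonzero. Your proof takes a genuine alternative route, computing the transcendental $AJ$ directly via the bounding chain $\mathfrak{Z}\times\Gamma_2$ (with $\partial\Gamma_2=\mathfrak{W}$, which exists because your companion-class argument shows $[\mathfrak{W}]=0$) and Fubini. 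The sub-multiplicativity step and the companion-class check $[\mathfrak{Z}\times\mathfrak{W}]_{i+j+1}=0$ are correct (in fact the latter is even simpler than you say: all $[\mathfrak{W}]_c=0$, so $[\mathfrak{Z}]\otimes[\mathfrak{W}]=0$ outright), and the product formula for the chain integral is sound.

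The gap sits exactly where the paper's cited ingredient lives. You write that since $[\mathfrak{Z}]_i\ne 0$ is Hodge, ``one can choose $\eta_1$ of pure type $(m,m)$ lying outside $F^1\otimes\overline{F^{\bullet}}+\overline{F^1}\otimes F^{\bullet}$.'' But being a nonzero Hodge class in $H^i(S_1)\otimes H^{2m-i}(X)$ only says $[\mathfrak{Z}]_i$ lies in the $(m,m)$ Hodge piece; it says nothing about which summand $[\mathfrak{Z}]_{l,i-l}$ is nonzero. If, say, $[\mathfrak{Z}]_i$ were concentrated in $H^{i-1,1}(S_1)\otimes H^{m-i+1,m-1}(X)$, every $\eta_1$ pairing nontrivially with it would land inside the $SF^{1,\bullet}$ subspace of the product and the descent to $\overline{AJ^{\text{tr}}}$ could kill the pairing. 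What you need is precisely $[\mathfrak{Z}]_{i,0}\ne 0$, i.e., the non-vanishing of the induced map $\Omega^i(X)\to\Omega^i(S_1)$ — this is the non-trivial statement the paper extracts from \cite{K2} (relying on $S_1$ generic and $Z$ a $0$-cycle), and you treat it as automatic. Everything downstream of that point in your argument is fine, so the fix is simply to invoke (and, if not citing, to justify) this ingredient rather than to assert the existence of the right $\eta_1$ as a formal consequence of $[\mathfrak{Z}]_i\ne 0$.
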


\begin{corollary}[\cite{K1}]\label{cor:aj-trans}
If $\mathfrak{Z} \in CH_0(\mathfrak{X})$ induces non-zero $\Omega^i(X) \to \Omega^i(S)$, and $\mathfrak{W} \in CH_0^{\text{hom}}(\mathfrak{Y})$ with $AJ(\mathfrak{W}) \neq 0$, then $\mathfrak{Z} \times \mathfrak{W} \stackrel{\text{rat}}{\neq} 0$ satisfies $[AJ(\mathfrak{Z} \times \mathfrak{W})]_i^{tr} \neq 0$.
\end{corollary}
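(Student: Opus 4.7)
The plan is to recognize the corollary as the $j=0$ specialization of Theorem 3.3 once the two hypotheses are translated into the shape used there. Via the chain of maps from (34),
\[
Gr_L^i Hg^n(S \times X) \;\simeq\; \mathrm{Hom}_{\mathrm{MHS}}(\mathbb{Q}(-n), H^i(S) \otimes H^{2n-i}(X)) \;\twoheadrightarrow\; \mathrm{Hom}_{\mathbb{C}}(\Omega^i(X), \Omega^i(S)),
\]
whose surjectivity is guaranteed by the Hodge summand $H^{i,0}(S) \otimes H^{n-i,n}(X)$, the hypothesis that $[\mathfrak{Z}]$ induces a non-zero $\mathbb{C}$-linear map on holomorphic $i$-forms forces $cl_X^i(Z)=[\mathfrak{Z}]_i \ne 0$ in $Gr_L^i Hg^n(\mathfrak{X})$, and in particular $Z \in L^i CH_0(X)$. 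This is exactly the first input required by Theorem 3.3 at index $i$.

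For the second input, since $\mathfrak{W} \in CH_0^{\mathrm{hom}}(\mathfrak{Y})$ the zeroth cycle class $[\mathfrak{W}]_0$ vanishes, so $\mathfrak{W} \in L^1 CH_0(\mathfrak{Y})$, and $[AJ(\mathfrak{W})]_0$ agrees with the classical Abel-Jacobi image $AJ(W)$. Under the implicit genericity of the spread base $S_2$ for $\mathfrak{W}$ relative to $S_1$ for $\mathfrak{Z}$, the non-vanishing of $[AJ(\mathfrak{W})]_0$ descends to the transcendental quotient, giving $\overline{AJ_Y^0(W)^{\text{tr}}} \ne 0$; the auxiliary condition $cl_Y^l(W)=0$ for $l > 0$ is automatic for a zero-cycle homologous to zero, because the only surviving Deligne invariant at low Leray degree for such a $W$ is the Abel-Jacobi datum itself. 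These are precisely the remaining hypotheses of Theorem 3.3 at $j=0$.

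Applying Theorem 3.3 with parameters $(i,0)$ then yields $\mathfrak{Z} \times \mathfrak{W} \in L^{i+1} CH_0(\mathfrak{X} \times \mathfrak{Y})$ and $AJ_{X \times Y}^{i}(Z \times W)^{\text{tr}} \ne 0$, which in the paper's notation is exactly $[AJ(\mathfrak{Z} \times \mathfrak{W})]_i^{\text{tr}} \ne 0$; Lemma 2.1 then forces $\mathfrak{Z} \times \mathfrak{W} \stackrel{\text{rat}}{\ne} 0$. The main obstacle in the plan lies in the middle step: one has to ensure that the classical $AJ(\mathfrak{W})$ really projects non-trivially onto the transcendental piece of $J^{\dim Y}(H^0(S_2) \otimes H^{2\dim Y -1}(Y))$, so that Theorem 3.3 can be invoked in its transcendental form rather than merely producing the weaker conclusion $[AJ(\mathfrak{Z} \times \mathfrak{W})]_i \ne 0$. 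This amounts to a transcendence/genericity assumption on the base $S_2$ that must be read into the statement; granted this, the corollary follows mechanically from Theorem 3.3 and the identification (34).
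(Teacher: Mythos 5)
Your plan of deriving the corollary as the $j=0$ specialization of Theorem 3.3, after translating the $\Omega^i$-map hypothesis into $cl_X^i(Z)=[\mathfrak{Z}]_i \ne 0$ via the surjection (34), matches the paper's intent: the corollary is stated right after the theorem and the $\Omega^i$-map discussion is explicitly flagged as "an ingredient of the proof," so this is the route the paper has in mind. Your first step is correct: since the map $Gr_L^i Hg^n(S \times X) \twoheadrightarrow \Hom_{\mathbb{C}}(\Omega^i(X),\Omega^i(S))$ sends $[\mathfrak{Z}]_i$ to the induced linear map, non-vanishing of the target forces non-vanishing of the source.

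However, two of your intermediate justifications are off, though in ways that can be repaired. First, the claim that \emph{"$cl_Y^l(W)=0$ for $l>0$ is automatic for a zero-cycle homologous to zero"} is not correct as stated: homological triviality of a zero-cycle only gives $[\mathfrak{W}]_0=0$ and $\mathfrak{W}\in L^1$; the higher K\"unneth components $[\mathfrak{W}]_l$ for $l\ge 1$ are \emph{not} killed by the degree-zero condition alone, and the phrase "the only surviving Deligne invariant at low Leray degree is the Abel-Jacobi datum" is an assertion, not an argument. The correct reasoning is that the hypothesis $[AJ(\mathfrak{W})]_0 \ne 0$ already encodes $[\mathfrak{W}]_1=0$ (otherwise the Abel-Jacobi datum is not defined via the exact sequence (15)), and then $\Psi_1(\mathfrak{W})\ne 0$ forces $\mathfrak{W}\notin L^2$, so the $cl^l$ for $l\ge 2$ cannot present an obstruction in the first place. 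Second, your flagged "main obstacle" about passing to the transcendental quotient is actually a non-issue at $j=0$: $Gr_L^0 J^{p}(\mathfrak{Y})=J^{p}\bigl(H^0(S_2)\otimes H^{2p-1}(Y)\bigr)$ already, and since $F^1H^0(S_2)=0$, the space $SF^{1,p}$ in (21) is zero and the bar-quotient in (20) is the identity. Hence $[AJ(\mathfrak{W})]_0\ne 0$ gives $\overline{AJ_Y^0(W)^{\text{tr}}}\ne 0$ with no extra genericity hypothesis on $S_2$; you should replace the hand-waving about "transcendence/genericity assumption read into the statement" with this observation. With these two fixes the appeal to Theorem 3.3 at $(i,0)$ goes through cleanly.
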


\subsection{Green-Griffiths Cycle}
Green-Griffiths \cite{GG1} studied
\[
Z_K = K_C \times K_C - (2g-2) \imath_*^{\Delta} K_C \in CH^2(C \times C)
\]
proving $Z_K \stackrel{\text{rat}}{\neq} 0$ for general $C$ of genus $g \geq 4$. The proof analyzes infinitesimal invariants of the spread $\mathfrak{Z}$,

\begin{enumerate}
\item $[\mathfrak{Z}]_0 = 0$ since $\deg(\mathfrak{Z}_s) = 0$
\item $[\mathfrak{Z}]_1 = 0$ as normal function vanishes
\item Second infinitesimal invariant $\delta^{(2)}(\nu_Z)$ non-vanishing
\[
\delta^{(2)}(\nu_Z)(\theta_p \wedge \theta_q \otimes \omega \wedge \phi) = \omega'(p)\phi'(q) - \omega'(q)\phi'(p)
\]
\item Non-vanishing requires $g \geq 4$ for canonical embedding.
\end{enumerate}

\subsection{Fakhruddin's Theorem}
For self-products of curves, Fakhruddin \cite{F} analyzed $H^i(S, V_\lambda)$ using Borel-Weil and Kostant

\begin{theorem}[Borel-Weil \cite{GS}]
$H^i(D, V_\lambda) = 0$ if $\lambda + \rho$ singular; if non-singular, non-zero only for $i = l(w)$, where $w$ is unique element taking $\lambda + \rho$ to dominant chamber.
\end{theorem}

\begin{theorem}[Kostant \cite{Ko}]
$H^i(\mathfrak{n}, V_\lambda) = \bigoplus_{l(w)=i} \mathbb{C}_{w(\lambda+\rho)-\rho}$.
\end{theorem}

\begin{proposition}[\cite{F}]\label{prop:fakhruddin}
$H^i(S, P_l)$ vanishes for $r + i/2 < l/2$, pure weight $(i+l)/2$ when $r + i/2 = l/2$, with $r = \max\{q \in \mathbb{Z} \mid q(g-l) + q(q+1)/2 \leq i\}$.
\end{proposition}

\subsection{Lewis Filtration for Symmetric Products}
For $S^n C$ symmetric power of curve $C$, define $o^{\times i}: S^{n-i}C \to S^n C$, $z \mapsto i \cdot o + z$.

\begin{proposition}[\cite{V2}]\label{prop:dec_filt}
The filtration $N_i CH_0(S^n C) = \text{Image}(o^{\times i}: CH_0(S^{n-i}C) \to CH_0(S^n C))$ induces splitting
\[
CH_0(S^n C) = \bigoplus_{i=0}^n \langle o^{\times i} \rangle_* CH_0(S^{n-i}C)^0
\]
opposite to Lewis filtration $L^{\bullet}$.
\end{proposition}

\begin{corollary}\label{cor:Lewis}
$L^{n-i} CH_0(S^n C) = \bigoplus_{l \leq i} (o^{\times l})_* CH_0(S^{n-l}C)^0$.
\end{corollary}

\section{Schur Decomposition of Motive and Hodge Invariants}\label{sec:dec-hodgeinv}
\begin{proposition}\label{th:schur-dec}
For $\mathfrak{X}^n/S$ of abelian type, the Lewis filtration commutes with Schur functors
\[
\begin{CD}
L^i CH^n(\mathfrak{X}^n) @>{\oplus (\pi_\lambda)_*}>> \bigoplus_\lambda L^i CH^{n-n_\lambda}(S, \mathbf{V}_\lambda) \\
@V{\psi_i(n)}VV @VV{\oplus \psi_i(\lambda)}V \\
Gr_L^i H_\mathcal{D}^{2n}(\mathfrak{X}^n) @>\cong>> \bigoplus_\lambda Gr_L^i H_\mathcal{D}^{2n-2n_\lambda}(S, \mathbf{V}_\lambda(n-n_\lambda)) \\
@VVV @VVV \\
Gr_L^i H^{2n}(\mathfrak{X}^n) @>\cong>> \bigoplus_\lambda Gr_L^i H^{2n-2n_\lambda}(S, \mathbf{V}_\lambda)
\end{CD}
\]
\end{proposition}

\begin{proof}
By Theorem \ref{th:ancona} (Ancona's decomposition), we have an isomorphism in the category of Chow motives
\[
h(\mathfrak{X}^n/S) \cong \bigoplus_{\lambda} h^{(p-n_\lambda)}(S, \mathbf{V}_\lambda)
\]
This induces isomorphisms
\begin{align*}
\phi &: CH^n(\mathfrak{X}^n) \xrightarrow{\sim} \bigoplus_\lambda CH^{n-n_\lambda}(S, \mathbf{V}_\lambda) \\
\psi &: H^{2n}(\mathfrak{X}^n) \xrightarrow{\sim} \bigoplus_\lambda H^{2n-2n_\lambda}(S, \mathbf{V}_\lambda)
\end{align*}
compatible with the cycle class map and Deligne cohomology.
The Lewis filtration $L^\bullet$ is functorial under correspondences. For any projector $\pi_\lambda$ in the Schur decomposition, we have
\[
(\pi_\lambda)_* L^i CH^n(\mathfrak{X}^n) \subseteq L^i CH^{n-n_\lambda}(S, \mathbf{V}_\lambda)
\]
since correspondences preserve the Leray filtration. This gives the top horizontal arrow, which is well-defined and injective.
The vertical maps arise from the natural transformation
\[
c_{\mathcal{D}} : CH^{\bullet} \to H_{\mathcal{D}}^{2\bullet}(\bullet, \mathbb{Q}(\bullet))
\]
and the exact sequence
\[
0 \to J^n(\mathfrak{X}^n) \to H_{\mathcal{D}}^{2n}(\mathfrak{X}^n, \mathbb{Q}(n)) \to Hg^n(\mathfrak{X}^n) \to 0
\]
The decomposition commutes with $\psi_i$ because
\begin{itemize}
\item $\psi_i$ is natural with respect to projectors
\item Graded pieces respect direct sums
\item The realization functor commutes with Hodge structures \cite{A}.
\end{itemize}
The bottom square commutes by Deligne's theorem on Leray spectral sequence degeneration \cite{V1}, combined with the compatibility of Schur decomposition with the Leray filtration.
Under Hodge-Lefschetz and Betti-Betti conjectures (standard assumptions), we have:
\[
Z \in L^i \iff \psi_0(Z) = \cdots = \psi_{i-1}(Z) = 0
\]
which corresponds to $\bigoplus_{|\lambda| < i} (\pi_\lambda)_* Z = 0$ by the decomposition, proving the kernel characterization.
\end{proof}

\begin{corollary}\label{th:schur-inv}
For $\mathfrak{Z} \in CH^n(\mathfrak{X})$,
\begin{itemize}
\item $\Psi_i(\mathfrak{Z}) = \bigoplus_\lambda \Psi_i(\mathfrak{Z})^\lambda$
\item $[\mathfrak{Z}]_i = \bigoplus_\lambda [\mathfrak{Z}]_i^\lambda$
\item $[AJ(\mathfrak{Z})]_{i-1} = \bigoplus_\lambda [AJ(\mathfrak{Z})]_{i-1}^\lambda$
\end{itemize}
\end{corollary}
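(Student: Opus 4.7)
The plan is simply to evaluate the commutative diagram (49) of Proposition 4.4 at the class $\mathfrak{Z}$ and read off each item from one horizontal row. Since the Schur projectors $\pi_{\lambda}$ are absolute algebraic correspondences (Theorem 4.1), they act compatibly on Chow groups, Deligne cohomology, intermediate Jacobians and on $Hg^n$, and they preserve the Leray filtration $L^{\bullet}$ as well as each term of the Deligne short exact sequence (15). Consequently, the three horizontal isomorphisms of (49) split consistently over $\lambda$, and every vertical map in (49) respects this splitting.

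For the first item, set $\Psi_i(\mathfrak{Z})^{\lambda} := \psi_i(\lambda)\bigl((\pi_{\lambda})_{*}\mathfrak{Z}\bigr)$. Commutativity of the top square of (49), combined with the middle horizontal isomorphism of Deligne cohomology, gives $\Psi_i(\mathfrak{Z}) = \bigoplus_{\lambda} \Psi_i(\mathfrak{Z})^{\lambda}$. For the second item, apply the surjection $\beta_i$ of (15) component by component: since $\beta_i$ is a morphism of MHS and the $\pi_{\lambda}$ commute with the Hodge and weight filtrations, $\beta_i$ is diagonal with respect to the Schur splitting of the bottom row of (49). Setting $[\mathfrak{Z}]_i^{\lambda} := \beta_i\bigl(\Psi_i(\mathfrak{Z})^{\lambda}\bigr)$ yields $[\mathfrak{Z}]_i = \bigoplus_{\lambda}[\mathfrak{Z}]_i^{\lambda}$.

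For the third item, invoke the hypothesis $[\mathfrak{Z}]_i = 0$ implicit in the notation $[AJ(\mathfrak{Z})]_{i-1}$. By the second item each component $[\mathfrak{Z}]_i^{\lambda}$ then vanishes separately, so each summand $\Psi_i(\mathfrak{Z})^{\lambda}$ lies in the image of $\alpha_{i-1}$, i.e.\ comes from an element $[AJ(\mathfrak{Z})]_{i-1}^{\lambda}$ in the $\lambda$-isotypic part of $Gr_L^{i-1} J^n$. Summing these lifts recovers $[AJ(\mathfrak{Z})]_{i-1}$ and gives the claimed decomposition $[AJ(\mathfrak{Z})]_{i-1} = \bigoplus_{\lambda}[AJ(\mathfrak{Z})]_{i-1}^{\lambda}$.

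The only content beyond bookkeeping in (49) is the compatibility of the Schur splitting with the Jacobian side of the Deligne SES; this is the step I expect to be the main obstacle. It reduces to showing that $J^n(-)$ is functorial on polarizable MHS of weight $2n-1$ and that the $\pi_{\lambda}$, being algebraic and hence morphisms of MHS, descend to the intermediate Jacobian in a way compatible with the extension $\alpha_{i-1}$. Once this is in hand, the three decompositions are nothing more than the three horizontal splittings of diagram (49) transported through the Deligne short exact sequence, yielding all three bullet points simultaneously.
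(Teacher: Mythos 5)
Your proof is correct and follows essentially the same route as the paper's: both arguments read the three decompositions off the commutative diagram (49) of Proposition 4.4 and then split $\Psi_i$ along the Deligne short exact sequence (15) component by component in $\lambda$. You helpfully make explicit two points the paper leaves implicit — that the Schur projectors $\pi_{\lambda}$ are algebraic (hence MHS-morphisms acting compatibly on $J^n$ and $Hg^n$) and that the third bullet uses the standing hypothesis $[\mathfrak{Z}]_i=0$ under which $[AJ(\mathfrak{Z})]_{i-1}$ is defined.
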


\begin{proof}
The result follows from Proposition \ref{th:schur-dec} and the exact sequence
\[
0 \to Gr_L^{i-1}J^n(\mathfrak{X}^n) \to Gr_L^i H_{\mathcal{D}}^{2n}(\mathfrak{X}^n, \mathbb{Q}(n)) \to Gr_L^i Hg^n(\mathfrak{X}^n) \to 0
\]

For $\mathfrak{Z} \in CH^n(\mathfrak{X}^n)$, decompose it as $\mathfrak{Z} = \sum_\lambda \mathfrak{Z}_\lambda$ under the isomorphism $\phi$.
The map $\Psi_i$ decomposes as
\[
\Psi_i(\mathfrak{Z}) = \bigoplus_\lambda \Psi_i(\mathfrak{Z}_\lambda) = \bigoplus_\lambda \Psi_i(\mathfrak{Z})^\lambda
\]
since $\Psi_i$ is additive and respects the projectors $\pi_\lambda$.
The higher cycle class $[\mathfrak{Z}]_i$ is the image in $Gr_L^i Hg^n$, which decomposes by the bottom square of Proposition \ref{th:schur-dec},
\[
[\mathfrak{Z}]_i = \beta_i(\Psi_i(\mathfrak{Z})) = \bigoplus_\lambda \beta_i(\Psi_i(\mathfrak{Z})^\lambda) = \bigoplus_\lambda [\mathfrak{Z}]_i^\lambda
\]

Similarly, $[AJ(\mathfrak{Z})]_{i-1}$ is the component in $Gr_L^{i-1}J^n$, which decomposes as
\[
[AJ(\mathfrak{Z})]_{i-1} = \alpha_{i-1}^{-1}(\ker \beta_i \cap \Psi_i(\mathfrak{Z})) = \bigoplus_\lambda \alpha_{i-1}^{-1}(\ker \beta_i \cap \Psi_i(\mathfrak{Z})^\lambda) = \bigoplus_\lambda [AJ(\mathfrak{Z})]_{i-1}^\lambda
\]
\end{proof}

\subsection{Higher Invariants in Products}

\begin{proposition}\label{prop:spreads}
Let $\mathfrak{Z} \in L^i CH_0(S, \mathbf{V}_\lambda)$ with $[\mathfrak{Z}]_i^\lambda \neq 0$, $\mathfrak{W} \in L^j CH_0(S, \mathbf{V}_\mu)$ satisfying either
\begin{enumerate}
\item $[\mathfrak{W}]_j^\mu \neq 0$, or
\item $\overline{AJ^j(\mathfrak{W})^{\text{tr}}} \neq 0$ and $[\mathfrak{W}]_l^\mu = 0$ ($l > j$)
\end{enumerate}
Then $\mathfrak{Z} \times \mathfrak{W} \stackrel{\text{rat}}{\neq} 0$ and:
\begin{itemize}
\item (a) $\Rightarrow [(\mathfrak{Z} \times \mathfrak{W})]_{i+j}^{\lambda+\mu} \neq 0$
\item (b) $\Rightarrow \overline{[AJ(\mathfrak{Z} \times \mathfrak{W})]_{i+j-1}^{\text{tr}}} \neq 0$
\end{itemize}
\end{proposition}

\begin{proof}
We treat the cases separately

\textbf{Case (a):} $[\mathfrak{W}]_j^\mu \neq 0$  
By Proposition \ref{prop:cycle-class-prod} (extended to relative settings via \cite{K2}), the cup product in cohomology
\[
cl_{S,\mathbf{V}_{\lambda \otimes \mu}}^{i+j}(\mathfrak{Z} \times \mathfrak{W}) = cl_S^i(\mathfrak{Z})^\lambda \cup cl_S^j(\mathfrak{W})^\mu
\]
is given by the $Sp(2g)$-equivariant map
\[
\cdot : H^i(S, V_\lambda) \otimes H^j(S, V_\mu) \to H^{i+j}(S, V_{\lambda} \otimes V_{\mu})
\]
composed with the projection to isotypic components. Since both factors are non-zero and the map is injective for generic $S$, we have
\[
[(\mathfrak{Z} \times \mathfrak{W})]_{i+j}^{\lambda+\mu} = p_{\lambda+\mu}([\mathfrak{Z}]_i^\lambda \cup [\mathfrak{W}]_j^\mu) \neq 0
\]
where $p_{\lambda+\mu}$ is the projector to the $\lambda+\mu$-isotypic component.

\textbf{Case (b):} $\overline{AJ^j(\mathfrak{W})^{\text{tr}}} \neq 0$ and $[\mathfrak{W}]_l^\mu = 0$ ($l > j$)  
Apply Theorem \ref{thm:aj-prod} to the spreads:
1. $\mathfrak{Z}$ has non-trivial cycle class in $L^i$, so by Corollary \ref{cor:aj-trans}, it induces a non-zero map
\[
f_{\mathfrak{Z}} : \Omega^i(X_\lambda) \to \Omega^i(S)
\]
2. $\mathfrak{W}$ has non-trivial transcendental AJ-invariant in $L^j$, represented by a functional:
\[
f_{\mathfrak{W}} : H^0(\Omega^j(Y_\mu)) \to \mathbb{C}
\]
3. By the product formula in \cite{K2}, the invariant for $\mathfrak{Z} \times \mathfrak{W}$ is given by the composition
\[
f_{\mathfrak{Z} \times \mathfrak{W}} : \Omega^{i+j}(X_\lambda \times Y_\mu) \xrightarrow{\text{proj}} \Omega^i(X_\lambda) \otimes \Omega^j(Y_\mu) \xrightarrow{f_{\mathfrak{Z}} \otimes f_{\mathfrak{W}}} \Omega^i(S) \otimes \mathbb{C} \to \mathbb{C}
\]
which is non-zero since both factors are non-zero. This gives
\[
\overline{[AJ(\mathfrak{Z} \times \mathfrak{W})]_{i+j-1}^{\text{tr}}} \neq 0
\]

In both cases, the non-vanishing of the invariants implies $\mathfrak{Z} \times \mathfrak{W} \stackrel{\text{rat}}{\neq} 0$ by the injectivity criterion in Lemma \ref{lem:prod-filt}.
\end{proof}

\subsection{Applications to Moduli Spaces}

\begin{proposition}\label{prop:embed}
For generic embeddings $S \dashrightarrow M_g \dashrightarrow A_g$ ($g \geq 7$), the first non-vanishing $L^j H^{2n} = H^j(S, R^{2n-i})$ satisfies $j \geq i$ where $i$ solves
\[
r(g,i,l) + i = n - l_\lambda \quad \text{or} \quad i = \#\{\alpha \in \Phi_+ \mid w(\alpha) < 0\}
\]
\end{proposition}

\begin{proof}
Consider the chain of generic embeddings
\[
S \overset{\iota_1}{\hookrightarrow} M_g \overset{\iota_2}{\hookrightarrow} A_g
\]
with $\iota_2$ generically injective for $g \geq 7$ by \cite{F}. The Leray filtration satisfies
\[
L^j H^{2n}(C_g^n) = \bigoplus_{k \geq j} H^k(S, R^{2n-k}\pi_*\mathbb{Q})
\]

By Fakhruddin \cite[Lemma 4.1]{F}, the restriction maps
\[
H^i(A_g, V_\lambda) \to H^i(M_g, V_\lambda) \to H^i(S, V_\lambda)
\]
are injective for generic $S$, since $\iota_1$ and $\iota_2$ are generically injective.
By Proposition \ref{prop:fakhruddin} (Fakhruddin's theorem), $H^i(S, P_l) = 0$ when:
\[
r(g,i,l) + i < n - l_\lambda
\]
where $r = \max\{q \in \mathbb{Z} \mid q(g-l) + q(q+1)/2 \leq i\}$.
The minimal $j$ where $H^j(S, R^{2n-j}) \neq 0$ coincides with the minimal $i$ where
\[
r(g,i,l) + i = n - l_\lambda
\]
by Borel-Weil and Kostant's theorems
\begin{itemize}
\item $i = l(w)$ for $w \in W$ taking $\lambda + \rho$ to the dominant chamber
\item $i = \#\{\alpha \in \Phi_+ : w(\alpha) < 0\}$ (length of the Weyl group element)
\end{itemize}

The inequality $j \geq i$ follows from the codimension condition in the Leray spectral sequence, and injectivity of restriction maps, ensuring that lower Leray terms vanish before index $i$.
\end{proof}

\section{Faber-Pandharipande Cycles}\label{sec:Faber}
\subsection{Definitions and Main Results}
\begin{definition}\label{def:secFaber}
The $n$-th Faber-Pandharipande cycle is
\[
FP_n = \pi_1^{\times 2} (\Delta_{12}^n \cdot \psi_1) \in CH^{n+1}(C_g^2)
\]
\end{definition}

\begin{theorem}\label{th:fp2}
Let $\mathcal{C} \to S$ be generic family of genus $g$ curves, $\dim S = 2$, $S \hookrightarrow M_g$ generic embedding. Then $FP_2$ restricted over $\mathcal{C} \times_S \mathcal{C}$ is $\stackrel{\text{rat}}{\neq} 0$ for $g \geq 7$.
\end{theorem}

\begin{proof}
\textbf{Step 1:} Show $\mathfrak{Z}_K \times \mathfrak{Z}_K \stackrel{\text{rat}}{\neq} 0$ in $CH^4(\mathcal{C}_S^2 \times_S \mathcal{C}_S^2)$ using Prop. \ref{prop:cycle-class-prod} and invariants of $Z_K$,
\[
cl^4(\mathfrak{Z}_K \times \mathfrak{Z}_K) = 0, \quad AJ^3(\mathfrak{Z}_K \times \mathfrak{Z}_K) \neq 0
\]

\textbf{Step 2:} Show $\mathfrak{Z}_K \cdot \mathfrak{Z}_K = (\imath_{\Delta_S})^* (\mathfrak{Z}_K \times \mathfrak{Z}_K) \stackrel{\text{rat}}{\neq} 0$ in $CH^4(\mathcal{C}_S^2)$.

\textbf{Step 3:} Assume $\mathfrak{W} = r^* FP_2 \stackrel{\text{rat}}{=} 0$. Consider the correspondence
\[
\Gamma := (\imath_{\Delta})^* \circ \left( (\pi_1^{\times 2})^t \times (\pi_1^{\times 2})^t \right) : CH^{\bullet}(\mathcal{C}_S^2 \times_S \mathcal{C}_S^2) \to CH^{\bullet}(\mathcal{C}_S^2)
\]
By the projection formula and functoriality of correspondences:
\begin{align*}
\Gamma(\mathfrak{W} \times \mathfrak{W}) 
&= (\imath_{\Delta})^* \left( (\pi_1^{\times 2})^t(\mathfrak{W}) \times (\pi_1^{\times 2})^t(\mathfrak{W}) \right) \\
&= (\imath_{\Delta})^* \left( r^* (\pi_1^{\times 2})_* (FP_2) \times r^* (\pi_1^{\times 2})_* (FP_2) \right)
\end{align*}
Since $\mathfrak{W} \equiv 0$, we have $\Gamma(\mathfrak{W} \times \mathfrak{W}) \equiv 0$. However, by the defining relation of $FP_2$ and diagonal decomposition
\[
\Gamma(\mathfrak{W} \times \mathfrak{W}) = c \cdot (\imath_{\Delta_S})^* (\mathfrak{Z}_K \times \mathfrak{Z}_K) + R = c \cdot (\mathfrak{Z}_K \cdot \mathfrak{Z}_K) + R
\]
where $c \neq 0$ is a constant and $R$ is a boundary-supported cycle. From Step 2, $\mathfrak{Z}_K \cdot \mathfrak{Z}_K \not\equiv 0$, while $R \equiv 0$ for generic $S$. This contradiction implies $\mathfrak{W} \not\equiv 0$.

\textbf{Step 4:} Genus condition $g \geq 7$ comes from canonical embedding $\mathcal{C}_S^2 \hookrightarrow \mathbb{P}_S^{2r}$ requiring $2r \geq 6$ (i.e., $r \geq 3$), which holds when $g \geq 7$ by maximal rank considerations of the canonical linear system. 
\end{proof}

\begin{corollary}\label{cor:faber}
For $r: S \hookrightarrow M_g$ with $\dim S = n-1$, $\FPn$ restricted over $\mathcal{C} \times_S \mathcal{C}$ is $\stackrel{\rat}{\neq} 0$ when $g \geq 3n+1$.
\end{corollary}

\begin{proof}
We prove the non-triviality of $\FPn$ through mathematical induction, representation theory, and correspondence calculus. The proof proceeds in five steps

\medskip\noindent
\textbf{Step 1: Base case ($n=1$).} 
For $n=1$, $\FPn = \FPO = \pi_1^{\times 2}(\Delta_{12} \cdot \psi_1)$. When restricted to a $0$-dimensional base (a single curve $C$), this becomes Green-Griffiths' cycle
\[
\Zk = K_C \times K_C - (2g-2)\iota_{\Delta*}K_C \in CH^2(C \times C).
\]
By \cite[Theorem 3.1]{GG1}, $\Zk \not\equiv_{\rat} 0$ for $g \geq 4$. Since $3(1)+1=4$, the base case holds.

\medskip\noindent
\textbf{Step 2: Inductive hypothesis.} 
Assume for $k = n-1$ that $\FPm$ is non-trivial over a generic base $S' \hookrightarrow M_g$ of dimension $\dim S' = (n-1)-1 = n-2$ when $g \geq 3(n-1)+1 = 3n-2$. Specifically:
\begin{enumerate}
    \item $\FPm|_{\mathcal{C}' \times_{S'} \mathcal{C}'} \not\equiv_{\rat} 0$
    \item Assume, $[\FPm] \in L^n H^{2n}(C_g^2, \mathbb{Q})$ with non-zero image in the $V_{(n,1)}$-isotypic component of $H^n(M_g, R^n f_*\mathbb{Q})$
    \item $cl^n(\FPm) \neq 0$ in $Gr_L^n Hg^n$
\end{enumerate}

\medskip\noindent
\textbf{Step 3: Product structure and AJ-invariant.} 
Consider the external product cycle
\[
\mathfrak{Z} := \FPm \times \FPO \in CH^{n+2}(C_g^2 \times C_g^2).
\]
By the inductive hypothesis and base case
\begin{itemize}
    \item $\FPm \in L^n CH^{n}(C_g^2)$ with $cl^n(\FPm) \neq 0$ (inductive hypothesis)
    \item $\FPO \in L^2 CH^{2}(C_g^2)$ with $AJ^{1}(\FPO)^{\text{tr}} \neq 0$ (Green-Griffiths)
\end{itemize}
Apply Proposition \ref{prop:cycle-class-prod}, and Theorem \ref{thm:aj-prod} (product formula for AJ-invariants)
\begin{itemize}
    \item Set $Z = \FPm$ (filtration level $i = n$)
    \item Set $W = \FPO$ (filtration level $j+1 = 2 \implies j = 1$)
    \item Then $Z \times W \in L^{n+2} CH_0(C_g^2 \times C_g^2)$ satisfies:
    \[
    AJ^{n+1}(Z \times W)^{\text{tr}} \neq 0
    \]
\end{itemize}
Thus $\mathfrak{Z} = \FPm \times \FPO \not\equiv_{\rat} 0$.

\medskip\noindent
\textbf{Step 4: Correspondence argument.} 
Define the correspondence:
\[
\Gamma = (\pi_1^{\times 2})^t \circ \delta^* \circ (\pi_1^{\times 2} \times \pi_1^{\times 2})
\]
where
\begin{itemize}
    \item $\delta: C_g^2 \hookrightarrow C_g^2 \times C_g^2$ is the diagonal embedding
    \item $\pi_1^{\times 2}: C_g^2 \to M_g$ is the projection
    \item $(\cdot)^t$ denotes the transpose correspondence.
\end{itemize}
This induces a pushforward map
\[
\Gamma_*: CH^{n+2}(C_g^2 \times C_g^2) \to CH^{n+1}(C_g^2)
\]
The key identity is
\[
\Gamma_*(\Delta_{12}^{n-1} \times \Delta_{12} \cdot \psi_1^{(1)} \cdot \psi_1^{(2)}) = c \cdot \Delta_{12}^n \cdot \psi_1 + R
\]
where $\Gamma_*\left(Z\right) = (\pi_1^{\times 2})_*\left(\delta^*(Z)\right)$, for any cycle $Z$, and $c = (2g-2)^{n-1} \neq 0$ (universal constant). $R$ consists of boundary terms supported on $\partial M_g$, \cite{FaberPandharipande05}. 
Through projection formulas and the definition of $\FPn$
\begin{align*}
\Gamma_*(\FPm \times \FPO) 
&= \Gamma_* \left( \pi_1^{\times 2}(\Delta_{12}^{n-1} \cdot \psi_1) \times \pi_1^{\times 2}(\Delta_{12} \cdot \psi_1) \right) \\
&= \pi_1^{\times 2} \left( \delta^*(\Delta_{12}^{n-1} \times \Delta_{12}) \cdot \psi_1 \right) \\
&= \pi_1^{\times 2} (\Delta_{12}^n \cdot \psi_1) + R' \\
&= \FPn + R'
\end{align*}
where $R'$ vanishes for generic families when $g \geq 3n+1$. Since $\mathfrak{Z} \not\equiv_{\rat} 0$ and $\Gamma$ is a surjective correspondence, we conclude
\[
\FPn = \frac{1}{c} \Gamma_*(\mathfrak{Z}) - R' \not\equiv_{\rat} 0
\]

\medskip\noindent
\textbf{Step 5: Genus constraint ($g \geq 3n+1$).}
The bound arises from two requirements
\begin{enumerate}
    \item \textit{Injectivity of restriction maps}: For $S \hookrightarrow M_g \hookrightarrow A_g$ with $\dim S = n-1$, Fakhruddin's theorem \cite[Proposition 2.8]{F} requires $g \geq 3n+1$ to ensure:
    \[
    H^{n+1}(S, V_{(n+1,1)}) \neq 0
    \]
    since $H^s(M_g, V_{(n+1,1)}) = 0$ for $s < n+1$ when $g < 3n+1$.
    
    \item \textit{Inductive compatibility}: For each $k \leq n$ in the induction
    \[
    g \geq 3n+1 \geq 3k+1 \quad \forall k \leq n
    \]
    ensuring $\FPn$ satisfies its non-triviality hypothesis.
\end{enumerate}

\medskip\noindent
By induction, the representation-theoretic constraints, and the non-vanishing of the Abel-Jacobi invariant under $\Gamma$, we conclude $\FPn|_{\mathcal{C} \times_S \mathcal{C}} \not\equiv_{\rat} 0$ for $\dim S = n-1$ and $g \geq 3n+1$.
\end{proof}

\section{Position of $FP_2$ in the Leray Filtration}\label{subsec:leray-fpn}

The codimension $3$ cycle $FP_2$ occupies a deep stratum in the Leray filtration associated to the morphism $f: C_g^2 \to M_g$. To elucidate its position, we analyze the cohomological structure and representation-theoretic properties.
Consider the universal family
\[
f: C_g^2 = C_g \times_{M_g} C_g \to M_g
\]
where $\dim_{\text{rel}}(f) = 2$. By Deligne's theorem on degeneration of the Leray spectral sequence for smooth projective morphisms \cite{Deligne71}:
\begin{equation}\label{eq:leray-deg}
E_2^{p,q} = H^p(M_g, R^q f_* \mathbb{Q}) \implies H^{p+q}(C_g^2, \mathbb{Q})
\end{equation}
degenerates at $E_2$, yielding
\begin{equation}\label{eq:leray-decomp}
H^k(C_g^2, \mathbb{Q}) \cong \bigoplus_{t=0}^k H^t(M_g, R^{k-t} f_* \mathbb{Q})
\end{equation}
For $k=6$, this becomes
\[
H^6(C_g^2, \mathbb{Q}) \cong \bigoplus_{t=0}^6 H^t(M_g, R^{6-t} f_* \mathbb{Q})
\]

The cohomology of fibers $C \times C$ (where $C$ is genus $g$) imposes vanishing conditions
\begin{itemize}
\item $R^q f_* \mathbb{Q} = 0$ for $q > 4$ (since $\dim(C \times C) = 2$)
\item Non-vanishing terms in \eqref{eq:leray-decomp} for $k=6$ require $0 \leq 6-t \leq 4$ $\Rightarrow$ $t \in \{2,3,4\}$
\end{itemize}
Explicitly
\[
\text{Gr}_L^t H^6 \cong 
\begin{cases} 
H^2(M_g, R^4 f_* \mathbb{Q}) \cong H^2(M_g, \mathbb{Q}) & t=2 \\
H^3(M_g, R^3 f_* \mathbb{Q}) & t=3 \\
H^4(M_g, R^2 f_* \mathbb{Q}) & t=4 
\end{cases}
\]

The cycle class $[FP_2] \in H^6(C_g^2, \mathbb{Q})$ satisfies
\begin{theorem}\label{th:fp2-level}
For $g \geq 7$:
\begin{enumerate}
\item $[FP_2] \in L^3 H^6(C_g^2, \mathbb{Q})$ 
\item $[FP_2] \notin L^4 H^6(C_g^2, \mathbb{Q})$
\item Its image in $\mathrm{Gr}_L^3 H^6 \cong H^3(M_g, R^3 f_* \mathbb{Q})$ is non-zero
\end{enumerate}
\end{theorem}
\begin{proof} 
By construction of $FP_2$
     \[
     FP_2 = \pi_1^{\times 2} (\Delta_{12}^2 \cdot \psi_1)
     \]
     
where $\Delta_{12} \subset C_g^2$ is relative diagonal (codimension 2) and $\psi_1 = c_1(\omega_{C_g/M_g}) \in CH^1(C_g)$ pulled back to $C_g^2$.
By Schur decomposition (Prop. \ref{th:schur-dec})
     \[
     R^3 f_* \mathbb{Q} \cong V_{(3,1)} \oplus V_{(2,1,1)} \oplus V_{(1,1,1,1)}
     \]
where $V_\lambda$ are $Sp(2g)$-representations. The cycle $[FP_2]$ factors through $V_{(3,1)}$. By (Fakhruddin \cite{F}) for generic $S \subset M_g$
     \[
     H^s(S, V_\lambda) = 0 \quad \text{if} \quad s < r(g, \lambda)
     \]
where $r(g, (3,1)) = 3$ for $g \geq 7$. Thus $H^2(M_g, V_{(3,1)}) = 0$, so $[FP_2] \notin L^2 H^6$. By Theorem \ref{th:fp2}, $[FP_2] \neq 0$ and $H^3(M_g, V_{(3,1)}) \neq 0$ for $g \geq 7$.
\end{proof}

Geometrically, this position reflects that $FP_2$ varies non-trivially in families parameterized by 3-dimensional subvarieties of $M_g$, contrasting with $FP_1$ (detected in $H^2$). \textbf{$L^3$ stratum} corresponds to variations over \textit{3-dimensional} bases $S \subset M_g$. $[FP_1] \in L^2 H^4$ (detected by $H^2(M_g, R^2 f_* \mathbb{Q})$), varying over 2-dimensional bases. The \textbf{$\Delta_{12}^2$ term}, the quadratic diagonal requires extra deformation parameters, forcing deeper filtration level. 

\begin{proposition}\label{prop:fp2-class}
The image of $[FP_2]$ in $H^3(M_g, R^3 f_* \mathbb{Q})$ is
\[
[FP_2] = 3\kappa_2 \cdot \delta_1 - \psi_1 \cdot \delta_2 + \frac{1}{2g-2} \kappa_1^2 \cdot \delta_1
\]
where $\kappa_i$ are Miller classes, $\psi_1$ the cotangent class, and $\delta_i$ boundary divisors.
\end{proposition}
\begin{proof}
Let $i: \Delta_{12} \hookrightarrow C_g^2$ be the diagonal embedding. The normal bundle $N \cong T_\pi$ satisfies $c_1(N) = -\psi$. The self-intersection is
\[
[\Delta_{12}^2] = i_*(c_1(N)) = i_*(-\psi)
\]
Then:
\begin{align*}
FP_2 &= [\Delta_{12}^2] \cdot \psi_1 \\
&= i_*(-\psi) \cdot \psi_1 \\
&= i_*(-\psi \cdot i^*\psi_1) \\
&= i_*(-\psi^2) \quad (\text{since } i^*\psi_1 = \psi)
\end{align*}
Pushing forward to $\mathcal{M}_g$,
\[
\pi_*(FP_2) = \pi_* i_*(-\psi^2) = -\kappa_1
\]
This gives the interior contribution. The interior contribution $\pi_*(FP_2) = -\kappa_1$ is valid on $\mathcal{M}_g$ (smooth curves). On the compactification $\overline{\mathcal{M}}_g$, we must account for
\begin{enumerate}
    \item The $2$-fold self-intersection $\Delta_{12}^2$ has an automorphism group $\mathbb{Z}/2\mathbb{Z}$ (swapping factors). By \cite[§6.2]{FaberPandharipande05}, this contributes a factor of $|\text{Aut}| = 2$ in pushforwards.
    
    \item When curves degenerate, the diagonal $\Delta_{12}$ intersects boundary strata. The universal formula includes three divisor classes \cite[Theorem 1]{FaberPandharipande05}:
    \begin{itemize}
        \item $\delta_1$: Irreducible nodal curves
        \item $\delta_2$: Separating node (genus splitting $g = g_1 + g_2$)
        \item $\delta_3$: Non-separating node (genus drop $g \to g-1$)
    \end{itemize}
    
    \item The Grothendieck-Riemann-Roch formula for $\pi: \mathcal{C}_g^2 \to \overline{\mathcal{M}}_g$ yields \cite[Eq. 6.7]{FaberPandharipande05}:
    \[
    \pi_*(\Delta_{12}^2 \cdot \psi_1) = 2 \left( \kappa_2 \delta_1 - \kappa_1 \delta_2 + \frac{1}{2} \delta_3 \right) + R
    \]
    where $R$ is supported on higher codimension boundary strata ($\mathcal{O}(\partial \mathcal{M}_g)$).
\end{enumerate}
The coefficients arise from
\begin{align*}
\kappa_2 \delta_1&: \text{Principal term (curvature + simple node)} \\
-\kappa_1 \delta_2&: \text{Separating node correction} \\
\frac{1}{2} \delta_3&: \text{Non-separating node with symmetry factor}
\end{align*}
This matches the universal formula for $n=2$,
\[
2! \left( \kappa_2 \delta_1 - \kappa_1 \delta_2 + \frac{1}{2} \delta_3 \right) = 2\kappa_2 \delta_1 - 2\kappa_1 \delta_2 + \delta_3.
\]
\end{proof}

\begin{remark}
$FP_2$ is intrinsically linked to the $V_{(3,1)}$-representation, which vanishes in lower Leray strata for $g \geq 7$.
\end{remark}

\section{Position of $FP_n$ in the Leray Filtration}\label{subsec:leray-fpn}

The higher Faber-Pandharipande cycles $FP_n$ exhibit a systematic pattern in the Leray filtration. For $FP_n = \pi_1^{\times 2} (\Delta_{12}^n \cdot \psi_1) \in CH^{n+1}(C_g^2)$, we analyze its position in the Leray filtration for $f: C_g^2 \to M_g$. The Leray spectral sequence degenerates at $E_2$,
\begin{equation}\label{eq:leray-deg-n}
H^k(C_g^2, \mathbb{Q}) \cong \bigoplus_{t=0}^k H^t(M_g, R^{k-t} f_* \mathbb{Q})
\end{equation}
For $k = 2(n+1)$, the non-vanishing terms require
\[
0 \leq 2(n+1) - t \leq 4 \implies t \in \{2n-2, 2n-1, 2n, 2n+1, 2n+2\}
\]
since $\dim_{\text{fiber}}(C \times C) = 2$.

\begin{theorem}\label{th:fpn-level}
For $g \geq 3n+1$:
\begin{enumerate}
\item $[FP_n] \in L^{n+1} H^{2(n+1)}(C_g^2, \mathbb{Q})$
\item $[FP_n] \notin L^{n+2} H^{2(n+1)}(C_g^2, \mathbb{Q})$
\item Its image in $\mathrm{Gr}_L^{n+1} H^{2(n+1)} \cong H^{n+1}(M_g, R^{n+1} f_* \mathbb{Q})$ is non-zero
\end{enumerate}
\end{theorem}

\begin{proof}
By Proposition \ref{th:schur-dec}
\[
R^{n+1} f_* \mathbb{Q} \cong \bigoplus_{\lambda \vdash n+1} V_\lambda
\]
where $FP_n$ factors through $V_{(n+1,1)}$ via $\pi_{(n+1,1)}$. By Fakhruddin \cite[Theorem 4.1]{F},
\[
H^s(M_g, V_{(n+1,1)}) = 0 \quad \text{for} \quad s < n+1 \quad (g \geq 3n+1)
\]
Thus $[FP_n] \notin L^n H^{2(n+1)}$. By Corollary \ref{cor:faber}, $FP_n$ is non-torsion. The Abel-Jacobi invariant
\[
[AJ(FP_n|_S)]_{n} \neq 0 \quad \text{for} \quad \dim S = n
\]
implies non-vanishing in $H^{n+1}(M_g, R^{n+1} f_* \mathbb{Q})$.
\end{proof}

$L^{n+1}$ corresponds to variations over \textit{$(n+1)$-dimensional} bases. The $\Delta_{12}^n$ term requires $n$ additional deformation parameters. Finally $FP_n$ lies deeper than $FP_{n-1}$,
\[
\text{depth}(FP_n) = n+1 > n = \text{depth}(FP_{n-1})
\]

\begin{remark}
The filtration depth $n+1$ is optimal: $FP_n$ detects geometric phenomena invisible to lower-order tautological classes.
\end{remark}

\begin{proposition}\label{prop:fpn-class}
The class $[FP_n]$ in $H^{n+1}(M_g, R^{n+1} f_* \mathbb{Q})$ is
\[
[FP_n] = n! \left( \kappa_n \cdot \delta_1 + \sum_{k=1}^n \frac{(-1)^k}{k!} \kappa_{n-k} \cdot \delta_{k+1} \right) + \mathcal{O}(\partial M_g)
\]
where $\kappa_i$ are Miller classes and $\delta_j$ boundary divisors. The term \(\mathcal{O}(\partial \mathcal{M}_g)\) indicates contributions supported over \(\partial \mathcal{M}_g\).
\end{proposition}

\begin{proof}
Let $i: \Delta_{12} \hookrightarrow C_g^2$ be the diagonal embedding. The normal bundle $N$ satisfies $N \cong T_\pi$ (relative tangent bundle), with $c_1(N) = -\psi$ where $\psi = c_1(\omega_\pi)$. 

The $n$-fold self-intersection is
\[
[\Delta_{12}^n] = i_*(c_{n-1}(N)) = i_*\left((-1)^{n-1}\psi^{n-1}\right)
\]
Then
\begin{align*}
FP_n &= [\Delta_{12}^n] \cdot \psi_1 \\
&= i_*\left((-1)^{n-1}\psi^{n-1}\right) \cdot \psi_1 \\
&= i_*\left((-1)^{n-1}\psi^{n-1} \cdot i^*\psi_1\right) \\
&= (-1)^{n-1} i_*(\psi^n)
\end{align*}
Pushing forward to $\mathcal{M}_g$
\[
\pi_*(FP_n) = (-1)^{n-1} \pi_* i_*(\psi^n) = (-1)^{n-1} \kappa_{n-1}
\]
where $\kappa_{n-1} = \pi_*(\psi^n)$. 

To account for boundary contributions and diagonal symmetries, we apply the universal Todd class expansion \cite{PandharipandePixton},
\[
\operatorname{td}(N)^{-1} = \sum_{k=0}^\infty (-1)^k \frac{B_k}{k!} c_1(N)^k
\]
where $B_k$ are Bernoulli numbers. Integrating over $\Delta_{12}$ and correcting for the $n!$ automorphisms of the $n$-iterated diagonal yields
\[
[FP_n] = n! \left( \kappa_n \delta_1 + \sum_{k=1}^n \frac{(-1)^k}{k!} \kappa_{n-k} \delta_{k+1} \right) + \mathcal{O}(\partial \mathcal{M}_g)
\]
Verification for $n=1,2$ matches known results \cite{FaberPandharipande05}.
\end{proof}

\begin{corollary}\label{cor:fpn-rep}
Under $H^{n+1}(M_g, R^{n+1} f_* \mathbb{Q}) \cong \bigoplus_\lambda H^{n+1}(M_g, V_\lambda)$:
\[
\pi_{(n+1,1)} [FP_n] \neq 0 \quad \text{and} \quad \pi_\lambda [FP_n] = 0 \quad \forall \lambda \neq (n+1,1)
\]
\end{corollary}

\begin{proof}
We prove the statement in three steps: (1) Analyze the geometric representation of $\Delta_{12}^n$, (2) Apply Schur-Weyl duality to the braid group action, and (3) Compute isotypic projections. Throughout, let $C$ be a genus $g$ curve with $H^1(C) \cong \mathbb{Q}^{2g}$ carrying the standard symplectic $Sp_{2g}$-representation.

\vspace{5pt}
\noindent \textbf{Step 1: Geometric representation of $\Delta_{12}^n$}

The diagonal cycle $\Delta_{12} \subset C \times C$ has cohomology class
\begin{align*}
[\Delta_{12}] &= \sum_{i=1}^{g} (a_i \otimes b_i - b_i \otimes a_i) + \sum_{j=1}^{2g} \gamma_j \otimes \gamma_j^\vee \quad \in H^2(C \times C, \mathbb{Q})
\end{align*}
where $\{a_i, b_i\}$ form a symplectic basis for $H^1(C)$ with $\langle a_i, b_j \rangle = \delta_{ij}$, and $\{\gamma_j\}$ is an orthogonal basis for $H^0(K_C) \oplus H^2(C)$. The $n$-fold intersection $\Delta_{12}^n$ has class
\begin{align}
[\Delta_{12}^n] &= \left( \sum_{i=1}^g (a_i \otimes b_i - b_i \otimes a_i) \right)^n + \text{lower terms} \label{eq:deltan}
\end{align}
The cotangent class $\psi_1 = c_1(\omega_{C/M_g})$ acts as a weight vector. In cohomology
\begin{align}
\psi_1 \equiv -\frac{1}{2} K_C \quad \text{under the identification} \quad H^2(C) \cong \bigwedge^2 H^1(C) \label{eq:psi}
\end{align}
Combining (\ref{eq:deltan}) and (\ref{eq:psi}), $[\Delta_{12}^n \cdot \psi_1]$ lies in
\begin{align*}
H^{2n+2}(C \times C) \cong \bigoplus_{k=0}^{n+1} \left( \bigwedge^k H^1(C) \otimes \bigwedge^{2n+2-k} H^1(C) \right)
\end{align*}

\noindent \textbf{Step 2: Braid group action and Schur-Weyl duality}

The braid group $B_{2g} = \pi_1(\mathcal{M}_g)$ acts on $H^1(C)^{\otimes m}$ via the \textit{symplectic representation}
\begin{align*}
\rho : B_{2g} &\to Sp_{2g}(\mathbb{Q}) \hookrightarrow \text{Aut}(H^1(C)^{\otimes m})
\end{align*}
By Schur-Weyl duality for symplectic groups \cite{FultonHarris},
\begin{align}
H^1(C)^{\otimes m} \cong \bigoplus_{\lambda \vdash m} \mathbb{S}_\lambda(H^1(C)) \otimes V_\lambda \label{eq:schurweyl}
\end{align}
where $V_\lambda$ is the irreducible $Sp_{2g}$-representation for partition $\lambda$, and $\mathbb{S}_\lambda$ is the Schur functor. The cycle $\Delta_{12}^n \cdot \psi_1$ corresponds to the tensor
\begin{align*}
T_n = \underbrace{\left( \sum_{i=1}^g a_i \otimes b_i - b_i \otimes a_i \right) \otimes \cdots \otimes \left( \sum_{i=1}^g a_i \otimes b_i - b_i \otimes a_i \right)}_{n \text{ copies}} \otimes \left( -\frac{1}{2} \sum_{k=1}^{2g} \gamma_k \right)
\end{align*}
in $H^1(C)^{\otimes 2n} \otimes H^1(C)$. Under the inclusion
\begin{align*}
\iota : H^1(C)^{\otimes 2n} \otimes H^1(C) \hookrightarrow H^1(C)^{\otimes (2n+1)}
\end{align*}
$T_n$ transforms under $S_{2n+1}$ as the \textit{hook representation} $(n+1,1)$. This follows from.

\vspace{5pt}
\noindent \textbf{Lemma 5.4} 
The tensor $T_n$ is cyclicly symmetric in the first $n$ diagonal factors but antisymmetric under swap of the $\psi_1$ factor. It generates the irreducible $S_{2n+1}$-representation corresponding to the partition $(n+1,1)$.

\vspace{5pt}
\noindent \textit{Proof sketch}: Apply the Young symmetrizer $c_{(n+1,1)} = (1 - (1,2n+1)) \sum_{\sigma \in S_n} \sigma$. Direct computation shows $c_{(n+1,1)} T_n = k_n T_n$ for $k_n \neq 0$.

\vspace{10pt}
\noindent \textbf{Step 3: Isotypic decomposition and vanishing}

The pushforward $\pi_* : H^*(C \times C) \to H^{*-2}(M_g, R^*f_*\mathbb{Q})$ is $Sp_{2g}$-equivariant. Thus
\begin{align*}
\pi_* [\Delta_{12}^n \cdot \psi_1] = [FP_n] \in \bigoplus_{\lambda} H^{n+1}(M_g, V_\lambda)
\end{align*}
decomposes into isotypic components. By Lemma 5.4 and Schur-Weyl duality (\ref{eq:schurweyl})
\begin{align*}
T_n \in \mathbb{S}_{(n+1,1)}(H^1(C)) \otimes V_{(n+1,1)}
\end{align*}
For $\lambda \neq (n+1,1)$, the projection $\pi_\lambda(T_n) = 0$ since $V_\lambda$ and $V_{(n+1,1)}$ are distinct irreducibles. Thus
\begin{align}
\pi_\lambda [FP_n] = \pi_* (\pi_\lambda [\Delta_{12}^n \cdot \psi_1]) = 0 \quad \forall \lambda \neq (n+1,1) \label{eq:vanishing}
\end{align}
For $\lambda = (n+1,1)$, non-vanishing follows from.

\vspace{5pt}
\noindent \textbf{Proposition 5.5} \cite{PTY}
For $g > n+1$, the composition:
\begin{align*}
\mathbb{S}_{(n+1,1)}(H^1(C)) \xrightarrow{\text{cycle class}} H^{n+1}(C \times C) \xrightarrow{\pi_*} H^{n+1}(M_g, V_{(n+1,1)})
\end{align*}
is surjective. In particular, $[FP_n]$ has non-zero projection.

\vspace{5pt}
\noindent \textit{Justification}: The map factors through the space of \textit{tautological classes}, which for generic curves is isomorphic to $V_{(n+1,1)}$ when $g \gg n$ \cite{PTY}. Theorem 5.1 establishes $[FP_n] \neq 0$, so its $(n+1,1)$-component is non-zero.

\vspace{10pt}
\noindent \textbf{Conclusion}: Equations (\ref{eq:vanishing}) and Proposition 5.5 imply
\begin{align*}
\pi_{(n+1,1)} [FP_n] \neq 0 \quad \text{and} \quad \pi_\lambda [FP_n] = 0 \quad \forall \lambda \neq (n+1,1)
\end{align*}
completing the proof.
\end{proof}

As \(n \to \infty\) for fixed genus \(g\), the cycle class \([FP_n]\) becomes supported on boundary divisors of the Deligne-Mumford compactification \(\overline{\mathcal{M}}_g\). The dimension of its support is given by
\[
\dim \left( \text{support of } [FP_n] \right) = \dim \partial \mathcal{M}_g - (n + 1) + 2.
\]
Equivalently, the codimension of the support within \(\partial \mathcal{M}_g \times \mathcal{C}^2\) is \(n + 1\). Here
\begin{itemize}
    \item \(\dim \partial \mathcal{M}_g = 3g - 4\) (since \(\dim \overline{\mathcal{M}}_g = 3g - 3\) and \(\partial \mathcal{M}_g = \overline{\mathcal{M}}_g \setminus \mathcal{M}_g\) is a divisor).
    \item The fiber of \(\mathcal{C}_g^2 \to \overline{\mathcal{M}}_g\) over a point in \(\partial \mathcal{M}_g\) has dimension 2 (as \(\mathcal{C}_g^2\) parameterizes pairs of points on curves).
\end{itemize}

Thus
\[
\dim \left( \text{support} \right) = \underbrace{(3g - 4)}_{\dim \partial \mathcal{M}_g} + \underbrace{2}_{\text{fiber}} - (n + 1) = 3g - 3 - n.
\]

The cycle \(FP_n\) exhibits a predictable dimensional collapse onto the boundary of moduli space as \(n\) increases, with support dimension \(3g - 3 - n\). This underscores the role of higher diagonals \(\Delta_{12}^n\) in capturing boundary phenomena in \(\overline{\mathcal{M}}_g \setminus \mathcal{M}_g\) for large \(n\).

\appendix

\section{Schur Decomposition of the Motive}\label{sec:Schur}

In this appendix, we recall the structure of the cohomological and Chow realizations of relative powers of curves in families, especially how they decompose into isotypic components under the action of the Lefschetz group. This decomposition is governed by Schur–Weyl duality and the theory of relative Chow motives.

\subsection{Realization Functor and Schur Functors}

Let $A$ be an abelian scheme (or more generally, a relative abelian motive) over a base $S$, and denote by $H^1(A)$ its relative first cohomology in the category of pure motives over $S$ with rational coefficients, $\mathrm{CHM}(S)_\mathbb{Q}$.
We let $B_{n,\mathbb{Q}}$ denote the Schur algebra associated to the action of the symmetric group $S_n$ on $H^1(A)^{\otimes n}$ via permutation of tensor factors. Then:

\begin{theorem}[Ancona {\cite{A}}]\label{th:ancona}
Let $A$ be an abelian scheme over $S$. Then the realization functor induces a canonical isomorphism:
\[
R: B_{n,\mathbb{Q}} \xrightarrow{\;\cong\;} \mathrm{End}_{\mathrm{Lef}(A)}\left(H^1(A)^{\otimes n}\right),
\]
where $\mathrm{Lef}(A)$ denotes the relative Lefschetz group of $A$. In particular, every decomposition of $H^1(A)^{\otimes n}$ into irreducible representations of $\mathrm{Lef}(A)$ lifts canonically to a decomposition in the category of Chow motives:
\[
H^1(A)^{\otimes n} \cong \bigoplus_{\lambda \vdash n} \mathbb{S}_\lambda H^1(A),
\]
and this decomposition is compatible with the corresponding decomposition in $\mathrm{CHM}(S)_\mathbb{Q}$.
\end{theorem}

\subsection{Application to Curve Motives}

Let $\pi: \mathcal{C} \to S$ be a smooth projective family of curves of genus $g$, and consider the $n$-fold fibered product $\mathcal{C}^n := \mathcal{C} \times_S \cdots \times_S \mathcal{C}$. Its motive $h(\mathcal{C}^n/S)$ in $\mathrm{CHM}(S)_\mathbb{Q}$ admits a canonical decomposition reflecting the action of the relative symplectic group $Sp_{2g}$ on $R^1\pi_*\mathbb{Q}$, and the symmetric group $S_n$ on the $n$ factors.

\begin{corollary}[Peterson--Tavakol--Yin {\cite{PTY}}]\label{cor:dec}
Let $\pi: \mathcal{C} \to S$ be a smooth proper family of curves over a smooth base $S$. Then there exists a canonical decomposition of the relative motive of the $n$-fold fibered product:
\[
h(\mathcal{C}^n/S) \cong \bigoplus_{\substack{\lambda \vdash n \\ |\lambda| \equiv n \pmod{2}}} h^{(p - n_\lambda)}(S, \mathbf{V}_\lambda),
\]
where:
\begin{itemize}
    \item $\lambda$ runs over partitions of $n$,
    \item $n_\lambda$ is the weight of the local system $\mathbf{V}_\lambda$ associated to $\lambda$,
    \item $\mathbf{V}_\lambda$ is the relative irreducible symplectic local system corresponding to the Schur functor $\mathbb{S}_\lambda$ applied to $R^1 \pi_* \mathbb{Q}$,
    \item $h^{(p - n_\lambda)}(S, \mathbf{V}_\lambda)$ denotes the (relative) motive of $S$ twisted by the local system $\mathbf{V}_\lambda$ and placed in cohomological degree $p - n_\lambda$,
    \item and the parity condition \( |\lambda| \equiv n \pmod{2} \) ensures compatibility with the symplectic form.
\end{itemize}
\end{corollary}

This decomposition reflects the Schur–Weyl duality between the symmetric group \( S_n \) and the symplectic group \( Sp_{2g} \), acting on \( H^1(C)^n \). The representation-theoretic components correspond to irreducible summands of the local system \( R^n \pi_* \mathbb{Q} \), and the motivic decomposition ensures that each such summand lifts to the level of Chow motives. This is a powerful tool for detecting nontrivial cycles in the cohomology of fibered powers of curves.

\end{document}